\newtheorem{theorem}{Theorem}[section]
\newtheorem{proposition}[theorem]{Proposition}
\newtheorem{corollary}[theorem]{Corollary}
\theoremstyle{definition}
\newtheorem{definition}[theorem]{Definition}
\newtheorem{example}[theorem]{Example}
\theoremstyle{remark}
\numberwithin{equation}{section}
\begin{document}

\title{Degrees of periods }

\author{}
\address{Center of Mathematical Sciences
Zhejiang University Hangzhou, Zhejiang, 310027, China}
\curraddr{} \email{wanj\_m@yahoo.com.cn}
\thanks{}

\author{Jianming Wan}
\address{}
\email{}
\thanks{}

\subjclass[2010]{Primary 11J81; Secondary 11d72}

\date{}

\dedicatory{}

\keywords{Periods, Degrees}

\begin{abstract}
We introduce the concept of degree to classify the periods in the
sense of Kontsevich and Zagier. Some properties of degree are
proved. Using this notion we give some new understanding of some
problems in transcendental number theory.  The zeta function of a
period is defined and some its interesting properties are given.
\end{abstract}

\maketitle

\section*{}

\specialsection*{}



\section{Introduction}

In the wonderful exposition [2], Kontsevich and Zagier defined the
concept of period: integral of a rational function over a domain
bounded by polynomial inequalities with rational coefficients. By
its definition the set of periods is countable and includes all
algebraic numbers. Moreover, it is a ring, the sum and product of
two periods are still periods. Many important transcendental numbers
arising from modular forms, L-functions, hypergeometric functions,
etc are periods. On the other hand, from the point of view of
algebraic geometry, periods are integrals of closed algebraic
differential forms over relative algebraic chains (cf.[1] and [2]).

The Galois theory plays a fundamental role in algebraic number
theory. What can we do something for transcendental number theory?
From Grothendieck's motive point of view, period is a suitable
category for building a Galois theory (called motive Galois group) (
cf. [1]).

The periods are also intended to bridge the gap between the
algebraic numbers and the transcendental numbers. They are natural
objects whether from the point of view of number theory or algebraic
geometry.

The main purpose of the paper is to try to classify these periods
under suitable category. The main tool is the concept of degree
introduced by the author. We find that this concept can give some
theoretic solutions to some problems in transcendental number
theory. For example, we prove that the sum of two transcendental
periods with different degrees is a transcendental number. We also
define the zeta function for a period and prove some interesting
properties.

\section{definition of a period}

Let us recall the definition of a period [2].

\begin{definition}
A period is a complex number whose real and imaginary parts are
absolutely convergent multiple integrals $$\int_{\Sigma}R$$ where
$\Sigma$ is a domain in $\mathbb{R}^{n}$ given by polynomial
inequalities with rational coefficients and $R$ is a rational
function with rational coefficients.
\end{definition}

In above definition one can replace ''rational coefficients'' by
''algebraic coefficients'' by introducing more variables. Because
the integral of any real function is equal to the area under its
graph, any period can be written as the volume of a domain defined
by polynomial inequalities with rational coefficients. So we can
rewrite the definition as

\begin{definition}
A period is a complex number whose real and imaginary parts are
absolutely convergent multiple integrals
$$\int_{\Sigma}dx_{1}...dx_{n}$$ where $\Sigma$ is a domain in
$\mathbb{R}^{n}$ given by polynomial inequalities with algebraic
coefficients.
\end{definition}

For simplicity, in what follows we always use definition 2.2 as the
definition of a period.

The set of periods is clearly countable. It is a ring and includes
all algebraic numbers. For instance, let $p$ be an algebraic number,
then $$p=\int_{0\leq x\leq p }dx.$$

Many interesting transcendental numbers also are periods.

\begin{example}
\begin{enumerate}
\item $$\pi=\iint_{x^{2}+y^{2}\leq1}dxdy.$$

\item $$\log(q)=\iint_{1\leq x\leq q,xy\leq1,y\geq0}dxdy,$$ where $q$
is a positive algebraic number.

\item All $\zeta(s)$ ($s$ is positive integers) are periods [2].
$\zeta(s)$ is Riemann zeta function
$$\zeta(s)=\sum_{n=1}^{+\infty}\frac{1}{n^{s}}.$$ Recall that (cf. [3])
$\zeta(2k)=\frac{2^{2k-1}}{(2k)!}B_{k}\pi^{2k}$ where $B_{k}$ is the
Bernoulli number.

\item Some values of the gamma function
$$\Gamma(s)=\int_{0}^{\infty}t^{s-1}e^{-t}dt$$ at rational values,
$\Gamma(p/q)^{q}$ ($p,q\in\mathbb{N}$) are periods [2].

\item Let $$E_{k}(z)=\frac{1}{2}\sum_{m,n\in\mathbb{Z};
(m,n)=1}\frac{1}{(mz+n)^{k}}$$ be the Eisenstein series of weight
$k$. If $z_{0}\in\overline{\mathbb{Q}}$, then $\pi^{k}E_{k}(z_{0})$
is a period [2].
\end{enumerate}
\end{example}

Though there are numerous non-period transcendental numbers, we have
not a simple criterion for testing them. So the first essential
problem is to find one concrete transcendental number which is not a
period.

It seems that (conjecturally in [2]) the Euler constant
$$\gamma=\sum_{n\rightarrow\infty}(1+\frac{1}{2}+...+\frac{1}{n}-\log
n)=0.5772156...$$ and basis of natural logarithms
$$e=\sum_{n\rightarrow\infty}(1+\frac{1}{n})^{n}=2.7182818...$$ are
not periods.

\section{degree of a period}

Since so many transcendental numbers are periods. How to
differentiate them? To deal with this problem, we introduce the
following concept.
\begin{definition}
If $p$ is a real period, we define the degree of $p$ as the minimal
dimension of the domain $\Sigma$ such that
$$p=\int_{\Sigma}1$$ where $\Sigma$ is a domain in
Euclid space given by polynomial inequalities with algebraic
coefficients.

For any complex period $p=p_{1}+ip_{2}$, we define
$\deg(p)=max(\deg(p_{1}),\deg(p_{2}))$.

\end{definition}

If $p$ is not a period, we may define the $\deg(p)=\infty$. Thus we
can extend the degree to whole complex number field $\mathbb{C}$.

By the definition, $\deg(0)=0$ and $\deg(p)=1$ if and only if $p$ is
an non-zero algebraic number. It is obviously that
$\deg(\pi)=\deg(\log(n))=2$, $n\in\mathbb{Z},n>1$.

Let $\mathbb{P}$ denotes the set of all periods. Let
$P_{k}=\{p\in\mathbb{P}| \deg(p)= k\}$, then
$\mathbb{P}=\bigcup_{k=0}^{\infty}P_{k}$. Thus we give a
classification for all periods.

The following two propositions are the basic properties of degrees.

\begin{proposition}
Let $p_{1},p_{2}$ be two periods, then $\deg(p_{1}p_{2})\leq
\deg(p_{1})+\deg(p_{2})$ and $\deg(p_{1}+p_{2})\leq
max(\deg(p_{1}),\deg(p_{2}))$.
\end{proposition}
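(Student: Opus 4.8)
The plan is to prove both inequalities constructively: starting from minimal‑dimension integral representations of $p_1$ and $p_2$, I will exhibit a representation of $p_1p_2$ (resp. of $p_1+p_2$) living in a Euclidean space of the asserted dimension. It suffices to treat real periods: the complex case then follows by applying the real case to $\mathrm{Re}(p_1p_2)=\mathrm{Re}\,p_1\,\mathrm{Re}\,p_2-\mathrm{Im}\,p_1\,\mathrm{Im}\,p_2$, $\mathrm{Im}(p_1p_2)=\mathrm{Re}\,p_1\,\mathrm{Im}\,p_2+\mathrm{Im}\,p_1\,\mathrm{Re}\,p_2$, and the analogous formulas for $p_1+p_2$, together with $\deg(p)=\max(\deg\mathrm{Re}\,p,\deg\mathrm{Im}\,p)$. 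Likewise I may assume $p_1,p_2\ge 0$, since a sign is absorbed into the constant integrand and affects no dimension. So fix domains $\Sigma_i\subset\mathbb{R}^{d_i}$ with $d_i=\deg p_i$ and $p_i=\mathrm{vol}_{d_i}(\Sigma_i)$.

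For the product I apply Fubini/Tonelli to the indicator of the Cartesian product. The set $\Sigma_1\times\Sigma_2\subset\mathbb{R}^{d_1+d_2}$ is again a domain cut out by polynomial inequalities with algebraic coefficients — the defining inequalities of $\Sigma_1$ in the first $d_1$ coordinates together with those of $\Sigma_2$ in the last $d_2$ — and $\mathrm{vol}_{d_1+d_2}(\Sigma_1\times\Sigma_2)=\mathrm{vol}_{d_1}(\Sigma_1)\,\mathrm{vol}_{d_2}(\Sigma_2)=p_1p_2$. Hence $\deg(p_1p_2)\le d_1+d_2=\deg p_1+\deg p_2$.

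For the sum, assume $d_1\ge d_2$ and pad the smaller domain: $\Sigma_2':=\Sigma_2\times[0,1]^{d_1-d_2}\subset\mathbb{R}^{d_1}$ is admissible and $\mathrm{vol}_{d_1}(\Sigma_2')=p_2$. After translating $\Sigma_2'$ by a suitable rational vector $v$ so that it becomes disjoint from $\Sigma_1$ (e.g. by pushing it far out along one coordinate axis; if a domain happens to be unbounded, replace it first by a bounded domain of the same dimension and volume), the set $\Sigma:=\Sigma_1\cup(\Sigma_2'+v)\subset\mathbb{R}^{d_1}$ has $\mathrm{vol}_{d_1}(\Sigma)=p_1+p_2$, which gives $\deg(p_1+p_2)\le d_1=\max(\deg p_1,\deg p_2)$.

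The one point genuinely needing justification is that this last $\Sigma$ is still a domain "given by polynomial inequalities with algebraic coefficients" and that it lives in dimension $d_1$ rather than $d_1+1$. This is immediate under the standard reading in which such domains are the semialgebraic subsets of $\mathbb{R}^{d_1}$ — the convention already in force in [2] when showing that periods are closed under addition — since finite unions of semialgebraic sets are semialgebraic; if one insists on a single system of inequalities, one re‑encodes the union using the separating hyperplane between $\Sigma_1$ and $\Sigma_2'+v$. The remaining steps — additivity of volume over the essentially disjoint union, and the analogous treatment of cancellation when $p_1$ and $p_2$ have opposite signs (there one excises a padded copy of $\Sigma_2$ from $\Sigma_1$, using closure of semialgebraic sets under set difference) — are routine and do not touch the dimension count. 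I expect this semialgebraic bookkeeping to be the only real obstacle; the Fubini argument for the product is essentially automatic.
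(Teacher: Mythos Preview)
Your argument is correct and follows essentially the same route as the paper: Cartesian product of the two domains for the multiplicative bound, and padding the lower-dimensional domain by a cube $[0,1]^{d_1-d_2}$ for the additive bound, with the complex case reduced to the real one via real and imaginary parts. If anything you are more careful than the paper, which simply writes $p_1+p_2$ as a sum of two $d_1$-dimensional volumes without addressing disjointness, signs, or the semialgebraic closure under union that you spell out.
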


\begin{proof}
First we consider the real case.  Assume that $\deg(p_{1})=k,
\deg(p_{2})=l$, then there exists two domains
$\Sigma_{1}\subseteq\mathbb{R}^{k},
\Sigma_{2}\subseteq\mathbb{R}^{l}$ both bounded by polynomial
inequalities with algebraic coefficients such that
$$p_{1}=\int_{\Sigma_{1}}dx_{1}...dx_{k},p_{2}=\int_{\Sigma_{2}}dy_{1}...dy_{l}.$$
One has
$$p_{1}p_{2}=\int_{\Sigma_{1}\times\Sigma_{2}}dx_{1}...dx_{k}dy_{1}...dy_{l},$$
where
$\Sigma_{1}\times\Sigma_{2}\subset\mathbb{R}^{k}\times\mathbb{R}^{l}=\mathbb{R}^{k+l}$
also bounded by polynomial inequalities with algebraic coefficients.
So $\deg(p_{1}p_{2})\leq \deg(p_{1})+\deg(p_{2})$.

Suppose that $k\leq l$, then
$$\deg(p_{1})+\deg(p_{2})=\int_{\Sigma_{k}\times\Delta}dx_{1}...dx_{l}+\int_{\Sigma_{1}}dx_{1}...dx_{l},$$
where $\Delta$ is the $l-k$-times product of $[0,1]$. Hence
$\deg(p_{1}+p_{2})\leq max(\deg(p_{1}),\deg(p_{2}))$.

For the complex case, let $p_{1}=a_{1}+ib_{1}, p_{2}=a_{2}+ib_{2}$,
where $a_{1},a_{2},b_{1},b_{2}$ are real periods. One gets
\begin{eqnarray*}
\deg(p_{1}p_{2})& = & \deg(a_{1}a_{2}-b_{1}b_{2}+i(a_{1}b_{2}+a_{2}b_{1}))\\
               & = & max(\deg(a_{1}a_{2}-b_{1}b_{2}),\deg(a_{1}b_{2}+a_{2}b_{1}))\\
               & \leq & max(max(\deg(a_{1}a_{2}),\deg(b_{1}b_{2})),max(\deg(a_{1}b_{2}),\deg(a_{2}b_{1})))\\
               & = & max(\deg(a_{1}a_{2}),\deg(b_{1}b_{2}),\deg(a_{1}b_{2}),\deg(a_{2}b_{1}))\\
               & \leq & max(\deg(a_{1})+\deg(a_{2}),\deg(b_{1})+\deg(b_{2}),\deg(a_{1})+\deg(b_{2}),\\
               &&\deg(a_{2})+\deg(b_{1}))\\
               & = & \deg(p_{1})+\deg(p_{2}))\\
 \end{eqnarray*}
and
\begin{eqnarray*}
\deg(p_{1}+p_{2})& = & max(\deg(a_{1}+a_{2}),\deg(b_{1}+b_{2}))\\
                & \leq & max(max(\deg(a_{1}),\deg(a_{2})),max(\deg(b_{1}),\deg(b_{2})))\\
                & = & max(\deg(a_{1}),\deg(a_{2}),\deg(b_{1}),\deg(b_{2}))\\
                & = & max(max(\deg(a_{1}),\deg(b_{1})),max(\deg(a_{2}),\deg(b_{2})))\\
                & = & max(\deg(p_{1}),\deg(p_{2})).\\
\end{eqnarray*}
\end{proof}

Generally, we can not get $\deg(p_{1}+p_{2})=
max(\deg(p_{1}),\deg(p_{2}))$. The simplest example is $p_{1}=\pi,
p_{2}=1-\pi$. The following examples also show that the equality
$\deg(p_{1}p_{2})= \deg(p_{1})+\deg(p_{2})$ is not true generally.

\begin{example}
 1):
Consider
$$\xi=\iiint_{x^{2}+y^{2}\leq1,0\leq
z(x^{2}+y^{2}+1)\leq1}dxdydz=\iint_{x^{2}+y^{2}\leq1}\frac{dxdy}{x^{2}+y^{2}+1}
$$ $$=\int_{0}^{2\pi}\int_{0}^{1}\frac{rdrd\theta}{r^{2}+1}=\pi\log2.$$
 $\deg(\xi)\leq3$. But $\deg(\pi)+\deg(\log2)=4$.

2): Consider $$\eta=\iiint_{x^{2}+y^{2}\leq1,0\leq
z((x^{2}+y^{2})^{2}+1)\leq4}dxdydz=\iint_{x^{2}+y^{2}\leq1}\frac{4dxdy}{(x^{2}+y^{2})^{2}+1}
$$ $$=\int_{0}^{2\pi}\int_{0}^{1}\frac{4rdrd\theta}{r^{4}+1}=\pi^{2}.$$
$\deg(\eta)\leq3$. But $2\deg(\pi)=4$.

\end{example}

\begin{proposition}
If $p$ is a nonzero algebraic number and $p_{1}$ is any non-zero
period, then $\deg(p+p_{1})=\deg(p_{1})=\deg(pp_{1})$.
\end{proposition}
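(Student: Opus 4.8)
The plan is to reduce both equalities to a single auxiliary fact, which I will call the \emph{scaling lemma}: if $a$ is a non-zero algebraic number and $q$ is any period, then $\deg(aq)\le\deg(q)$. This is genuinely sharper than what Proposition 3.2 offers (which would only give $\deg(aq)\le 1+\deg(q)$ since $\deg(a)=1$), and it is the only non-formal ingredient needed. To prove it in the real case, I would write $q=\int_{\Sigma}dx_{1}\dots dx_{k}$ with $k=\deg(q)$ and $\Sigma\subseteq\mathbb{R}^{k}$ cut out by inequalities $P_{j}(x_{1},\dots,x_{k})\ge 0$ with algebraic coefficients. Assuming first $a>0$, I apply the linear change of variables $(x_{1},\dots,x_{k})\mapsto(ax_{1},x_{2},\dots,x_{k})$, whose Jacobian is $a$; its image $\Sigma'$ has $k$-dimensional volume $a\cdot\mathrm{vol}(\Sigma)=aq$, and $\Sigma'$ is defined by the inequalities $P_{j}(x_{1}/a,x_{2},\dots,x_{k})\ge 0$. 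Multiplying the $j$-th inequality through by $a^{d_{j}}$, where $d_{j}$ is the degree of $P_{j}$ in $x_{1}$, clears all denominators and produces a polynomial in $x_{1},\dots,x_{k}$ with algebraic coefficients (every surviving monomial carries a non-negative power of $a$), while the inequality sign is at worst flipped, still giving a polynomial inequality with algebraic coefficients. Hence $aq$ is realized by a $k$-dimensional domain, so $\deg(aq)\le k=\deg(q)$.

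For the complex case of the lemma, write $q=q_{1}+iq_{2}$ and $a=a_{1}+ia_{2}$ with $q_{i},a_{i}$ real, so that $aq=(a_{1}q_{1}-a_{2}q_{2})+i(a_{1}q_{2}+a_{2}q_{1})$; applying the real case coordinate-wise together with the sum bound of Proposition 3.2 gives $\deg(aq)\le\max(\deg(q_{1}),\deg(q_{2}))=\deg(q)$. I would also dispose of the harmless sign bookkeeping here: the same scaling carried out with $|a|$ in place of $a$ shows $\deg(|a|\,q)\le\deg(q)$, which is all that is needed once one fixes the natural convention making $\deg$ insensitive to sign on real periods; in the regime where $p$ and $p_{1}$ are positive (the natural reading in a volumes-only framework) no sign issue arises at all.

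Granting the lemma, the two equalities follow at once. For the product: the lemma with $a=p$ gives $\deg(pp_{1})\le\deg(p_{1})$, and the lemma applied again with the non-zero algebraic number $p^{-1}$ to the period $pp_{1}$ gives $\deg(p_{1})=\deg\!\big(p^{-1}(pp_{1})\big)\le\deg(pp_{1})$, whence $\deg(pp_{1})=\deg(p_{1})$. For the sum: Proposition 3.2 with $\deg(p)=1\le\deg(p_{1})$ yields $\deg(p+p_{1})\le\max(1,\deg(p_{1}))=\deg(p_{1})$, while writing $p_{1}=(p+p_{1})+(-p)$ and using Proposition 3.2 once more gives $\deg(p_{1})\le\max(\deg(p+p_{1}),1)$; when $p_{1}$ is transcendental ($\deg(p_{1})\ge 2$) the maximum on the right cannot be the term $1$, so $\deg(p_{1})\le\deg(p+p_{1})$ and equality follows, and when $p_{1}$ is algebraic both sides equal $1$ provided $p+p_{1}\ne 0$ (the degenerate case $p_{1}=-p$ must be excluded from, or tacitly read into, the statement). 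The hard part is the scaling lemma — specifically, checking that the substitution $x_{1}\mapsto x_{1}/a$ followed by clearing denominators keeps the defining system polynomial with algebraic coefficients, and handling the signs; once that is secured, everything else is a bookkeeping application of Proposition 3.2 together with the invertibility of $p$ in $\overline{\mathbb{Q}}$.
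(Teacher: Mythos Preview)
Your argument follows essentially the same route as the paper's: establish the scaling inequality $\deg(aq)\le\deg(q)$ (which the paper dismisses in the real case as ``obviously from the definition'' without your explicit change-of-variables justification), extend to the complex case by decomposing into real and imaginary parts, and then close both equalities using the invertibility of $p$ in $\overline{\mathbb{Q}}$ together with Proposition~3.2. You are in fact more careful than the paper, correctly flagging the degenerate case $p_{1}=-p$ (where $\deg(p+p_{1})=0\ne 1=\deg(p_{1})$), which the paper's own proof glosses over.
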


\begin{proof}
The first equality follows from $\deg(p_{1})=\deg(-p+p+p_{1})\leq
\deg(p+p_{1})\leq \deg(p_{1})$. For the real case,
$\deg(pp_{1})=\deg(p_{1})$ is obviously from the definition. In
complex case, let $p=a+ib,p_{1}=a_{1}+ib_{1}$,
$a,b\in\overline{\mathbb{Q}}$, $a_{1}$ and $b_{1}$ are any real
periods. We have
\begin{eqnarray*}
\deg(pp_{1})& = & \deg(aa_{1}-bb_{1}+i(ba_{1}+ab_{1}))\\
           & = & max(\deg(aa_{1}-bb_{1})\deg(ba_{1}+ab_{1}))\\
           & \leq & max(max(\deg(aa_{1}),\deg(bb_{1})),max(\deg(ba_{1}),\deg(ab_{1})))\\
           & = & max(\deg(aa_{1}),\deg(bb_{1}),\deg(ba_{1}),\deg(ab_{1})).\\
\end{eqnarray*}
Since $p\neq0$, the last equation equals $\deg(p_{1})$. So
$\deg(pp_{1})\leq \deg(p_{1})$. But $p$ is any nonzero algebraic
number, so one has $\deg(p_{1})=\deg(\frac{1}{p}pp_{1})\leq
\deg(pp_{1})$. Hence $\deg(p_{1})=\deg(pp_{1})$.

\end{proof}

Denote $\mathbb{P}_{k}=\{p\in\mathbb{P}| \deg(p)\leq k\}$,
$\mathbb{P}_{k}+\mathbb{P}_{l}=\{p_{k}+p_{l}|
p_{k}\in\mathbb{P}_{k}, p_{l}\in\mathbb{P}_{l}\}$,
$\mathbb{P}_{k}\mathbb{P}_{l}=\{p_{k}p_{l}| p_{k}\in\mathbb{P}_{k},
p_{l}\in\mathbb{P}_{l}\}$. Then
$\mathbb{P}_{k}+\mathbb{P}_{l}\subseteq\mathbb{P}_{max(k,l)}$ and
$\mathbb{P}_{k}\mathbb{P}_{l}\subseteq\mathbb{P}_{k+l}$.
$\mathbb{P}_{k}$ has a good graded characteristic. It is a additive
group but in general (except $k=1$) not a ring. Proposition 3.4
tells us that $\mathbb{P}_{k}$ is a $\mathbb{P}_{1}$-module, i.e.
$\overline{\mathbb{Q}}$-module.

If we consider the map $d: \mathbb{P}\times
\mathbb{P}\rightarrow\mathbb{Z}$ by $$(p_{1},p_{2})\mapsto
\deg(p_{1}-p_{2}).$$

It obviously satisfies
\begin{itemize}
\item $d(p_{1},p_{2})=0$ if and only if $p_{1}=p_{2}$.
\item $d(p_{1},p_{2})=d(p_{2},p_{1})$.
\item $d(p_{1},p_{2})\leq d(p_{1},p_{3})+d(p_{3},p_{2})$.
\end{itemize}
So $d$ defines a metric on $\mathbb{P}$. The $\mathbb{P}_{k}$ is a
ball of radius $k$ and center at 0.

\section{Some results of periods with low degrees and related problems}

Using the decomposition properties of rational functions with one
variable, we can get the precise forms of some periods with degrees
$\leq2$.

\begin{theorem}
Let $p$ be a real period with $\deg(p)\leq2$. If it can be written
as $p=\int R(x)dx$ for some rational function $R(x)$. Then it has
the form $a\arctan\xi+b\log\eta+c$, where
$a,b,c,\xi,\eta\in\overline{\mathbb{Q}}$.
\end{theorem}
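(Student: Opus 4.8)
The plan is to reduce everything to the partial-fraction decomposition of $R(x)$ over $\overline{\mathbb{Q}}$ and then integrate term by term, checking that each resulting antiderivative, when combined with the (algebraic) limits of integration, contributes only terms of the three allowed types. First I would write, by the fundamental theorem of algebra over $\overline{\mathbb{Q}}$,
$$R(x)=P(x)+\sum_{j}\sum_{m\ge 1}\frac{A_{j,m}}{(x-\alpha_{j})^{m}},$$
where $P\in\overline{\mathbb{Q}}[x]$ and $\alpha_{j},A_{j,m}\in\overline{\mathbb{Q}}$. The polynomial part $P$ integrates to a polynomial with algebraic coefficients, which at algebraic endpoints yields an algebraic number, absorbed into the constant $c$. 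The terms with $m\ge 2$ integrate to $\frac{-A_{j,m}}{(m-1)(x-\alpha_{j})^{m-1}}$, again algebraic at algebraic endpoints (using that $p$ is a real \emph{period}, so the integral converges, forcing the contour/domain to avoid the poles; the endpoints are therefore not among the $\alpha_{j}$). So the only interesting terms are the simple poles $\frac{A_{j}}{x-\alpha_{j}}$, whose primitive is $A_{j}\log(x-\alpha_{j})$.

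Next I would group the simple-pole terms into those with real $\alpha_{j}$ and those with genuinely complex $\alpha_{j}$ occurring in conjugate pairs (since $R$ has rational, hence real, coefficients after clearing denominators — or more carefully, since $p$ is real, the imaginary contributions must cancel). For a real residue at a real pole we get a term $A_{j}\log|x-\alpha_{j}|$ evaluated between algebraic endpoints, contributing $b\log\eta$ with $b,\eta\in\overline{\mathbb{Q}}$. For a conjugate pair $\alpha,\bar\alpha$ with residues $A,\bar A$, the combination $A\log(x-\alpha)+\bar A\log(x-\bar\alpha)$ is, writing $\alpha=u+iv$ and $A=s+it$ with $u,v,s,t\in\overline{\mathbb{Q}}$, equal to $s\log((x-u)^{2}+v^{2})-2t\arctan\!\big(\tfrac{x-u}{v}\big)$ up to an additive constant; evaluated at algebraic endpoints this produces terms $b\log\eta$ and $a\arctan\xi$ with all of $a,b,\xi,\eta\in\overline{\mathbb{Q}}$. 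Finally, using the addition formulas $\log\eta_{1}+\log\eta_{2}=\log(\eta_{1}\eta_{2})$ and $\arctan\xi_{1}+\arctan\xi_{2}=\arctan\!\big(\tfrac{\xi_{1}+\xi_{2}}{1-\xi_{1}\xi_{2}}\big)$ (mod a rational multiple of $\pi$, and $\pi=4\arctan 1$ is itself of the allowed form), I would collapse the finitely many logarithmic terms into a single $b\log\eta$ and the finitely many arctangent terms into a single $a\arctan\xi$, folding all leftover algebraic constants into $c$.

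The main obstacle I anticipate is not any single computation but the \emph{bookkeeping of which coefficients stay in $\overline{\mathbb{Q}}$}: one must be careful that $R(x)$ may a priori have algebraic (not merely rational) coefficients under Definition 2.2, that the partial-fraction coefficients $\alpha_{j},A_{j,m}$ then lie in a finite extension of $\mathbb{Q}$, and that the real/imaginary splitting of the complex-pole terms keeps $u,v,s,t$ algebraic — all of which is fine since $\overline{\mathbb{Q}}$ is a field closed under the operations involved. A secondary subtlety is justifying that the domain of integration (an algebraically-defined subset of $\mathbb{R}$, i.e. a finite union of intervals with algebraic endpoints) contributes finitely many endpoint evaluations, none at a pole, so that the formal antiderivative computation is legitimate; this uses absolute convergence of the period integral. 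Once these points are set, the collapsing of many $\log$'s and $\arctan$'s into one of each via the addition formulas is routine.
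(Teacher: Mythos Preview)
Your approach is essentially the paper's: partial-fraction decomposition followed by term-by-term integration. The only cosmetic difference is that you split over $\overline{\mathbb{Q}}$ into linear factors and then re-pair complex conjugate poles, whereas the paper keeps the real partial-fraction form with irreducible quadratics $\frac{Bx+C}{(x^{2}+bx+c)^{n}}$; the two routes produce the same $\log$ and $\arctan$ contributions.

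There is, however, a genuine gap in your final ``collapsing'' step. The identity $\log\eta_{1}+\log\eta_{2}=\log(\eta_{1}\eta_{2})$ lets you merge two logarithms only when their \emph{coefficients agree}; it does not reduce $b_{1}\log\eta_{1}+b_{2}\log\eta_{2}$ to a single $b\log\eta$ with $b,\eta\in\overline{\mathbb{Q}}$ unless $b_{1}$ and $b_{2}$ are rationally commensurable (so that one can pass to $\eta_{i}^{\,q_{i}}$ with integer exponents). The residues $A_{j}$ arising from your decomposition are merely algebraic, not rational, so in general no such reduction is available; the same objection applies to the $\arctan$ addition law. Thus the output of your argument is a finite $\overline{\mathbb{Q}}$-linear combination $\sum_{j}a_{j}\arctan\xi_{j}+\sum_{k}b_{k}\log\eta_{k}+c$, not literally a single $a\arctan\xi+b\log\eta+c$. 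The paper's one-line proof does not attempt this reduction either---it checks the form only for each partial-fraction type separately---so the theorem as stated should presumably be read with ``$a\arctan\xi+b\log\eta+c$'' standing for such a finite combination; but your proposal claims more than it delivers at that last step.
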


\begin{proof}
Because any rational function can decompose as following four types
$$\frac{A}{x-a},\frac{A}{(x-a)^{n}},\frac{Bx+C}{x^{2}+bx+c},\frac{Bx+C}{(x^{2}+bx+c)^{n}}$$
where $A,B,C,a,b,c\in\overline{\mathbb{Q}}$ and $n\geq2$. By
elementary integral theory, in every type the integral value has the
form $a\arctan\xi+b\log\eta+c$.
\end{proof}

It seems very difficult to determine the degree of a given period.
We present following three problems.

\textbf{Problem 1:} Give a concrete period such that the degree
$\geq3$.

\textbf{Problem 2:} Let $p_{1},p_{2}$ be two non-algebraic periods.
Does $\deg(p_{1}p_{2})\geq2$?

\textbf{Problem 3:} Determine the precise forms of all periods with
degrees $=2$.

\section{Apply the degree to transcendence}

In general, determining the transcendence of the sum of two
transcendental numbers is a very difficult problem. For example, the
transcendence of $e+\pi$ is a longstanding problem in number theory.
But if the transcendental numbers are periods. We have some
theoretic solutions.

\begin{theorem}
Let $p_{1},p_{2}$ be two transcendental periods. If $\deg(p_{1})\neq
\deg(p_{2})$, then both $p_{1}/p_{2}$ and $p_{1}+p_{2}$ are
transcendental numbers.
\end{theorem}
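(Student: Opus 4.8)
The plan is to argue by contradiction using the key algebraic fact that a nonzero algebraic number does not change the degree of a period (Proposition 3.4), together with the additivity bound $\deg(p_1+p_2)\le\max(\deg(p_1),\deg(p_2))$ from Proposition 3.3. Assume without loss of generality that $\deg(p_1)<\deg(p_2)$; write $k=\deg(p_1)$ and $\ell=\deg(p_2)$, so $k<\ell$ and in particular $\ell\ge 2$ since $p_2$ is transcendental (degree $1$ periods are exactly the nonzero algebraic numbers, and degree $0$ is just $0$).

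First I would handle the sum. Suppose for contradiction that $p_1+p_2=\alpha$ were algebraic. If $\alpha=0$ then $p_2=-p_1$, forcing $\deg(p_2)=\deg(p_1)$, a contradiction; if $\alpha\ne 0$ then $p_2=\alpha+(-p_1)$ and, since $-p_1$ is a period of degree $k$ (negation does not change degree) while $\alpha$ is a nonzero algebraic number, Proposition 3.4 gives $\deg(p_2)=\deg(\alpha+(-p_1))=\deg(-p_1)=k$, again contradicting $k<\ell$. Hence $p_1+p_2$ is transcendental.

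Next I would handle the quotient. Suppose $p_1/p_2=\beta$ is algebraic; note $\beta\ne 0$ because $p_1\ne 0$. Then $p_1=\beta p_2$, and by Proposition 3.4 (multiplication by a nonzero algebraic number preserves degree) we get $\deg(p_1)=\deg(\beta p_2)=\deg(p_2)$, i.e.\ $k=\ell$, contradicting the hypothesis $\deg(p_1)\ne\deg(p_2)$. Therefore $p_1/p_2$ is transcendental.

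The only genuinely delicate point is the invocation of Proposition 3.4, which requires one of the two periods involved to be a nonzero \emph{algebraic} number; the proof by contradiction is precisely arranged so that the hypothetical algebraic value $\alpha$ or $\beta$ plays that role, and one must separately dispatch the degenerate case $\alpha=0$ in the sum statement (there is no such issue for the quotient since $\beta\ne0$ automatically). Everything else is a direct bookkeeping of degrees, so no serious obstacle remains once the case split is set up correctly.
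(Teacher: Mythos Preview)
Your argument is correct. The quotient case is identical to the paper's proof. For the sum, you take a slightly different route: you assume $p_1+p_2=\alpha$ is algebraic and use Proposition~3.4 (addition of a nonzero algebraic number preserves degree) to force $\deg(p_2)=\deg(-p_1)=\deg(p_1)$, a contradiction. The paper instead uses the sandwich
\[
\deg(p_2)=\deg(-p_1+(p_1+p_2))\le\max(\deg(p_1),\deg(p_1+p_2))\le\max(\deg(p_1),\deg(p_2))=\deg(p_2)
\]
via Proposition~3.2 to conclude $\deg(p_1+p_2)=\deg(p_2)\ge 2$, hence transcendental. The paper's route yields the slightly stronger byproduct that $\deg(p_1+p_2)=\deg(p_2)$ exactly, whereas yours is more direct for the transcendence claim and cleanly isolates the degenerate case $\alpha=0$. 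One small correction: the additivity bound you cite as ``Proposition~3.3'' is Proposition~3.2 in the paper's numbering (3.3 is an example); in fact your written proof never actually uses that bound, relying solely on Proposition~3.4.
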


\begin{proof}
If $p_{1}/p_{2}=a$ is algebraic,  by Proposition 3.4 one have
$\deg(p_{1})=\deg(p_{2}a)=\deg(p_{2})$. Which is a contradiction.

Since $\deg(p_{1})\neq \deg(p_{2})$, we may assume that
$\deg(p_{1})<\deg(p_{2})$. By Proposition 3.2 one has
$\deg(p_{2})=\deg(- p_{1}+p_{1}+p_{2})\leq
max(\deg(p_{1}),\deg(p_{1}+p_{2}))=\deg(p_{1}+p_{2})\leq
max(\deg(p_{1}),\deg(p_{2}))=\deg(p_{2})$. So we have
$\deg(p_{1}+p_{2}))=\deg(p_{2})$. Hence $p_{1}+p_{2}$ are
transcendental.
\end{proof}

More generally, we have following result about linearly independence
\begin{theorem}
Let $p_{1},p_{2}$ be any two complex numbers. If $\deg(p_{1})\neq
\deg(p_{2})$, then $p_{1}$ and $p_{2}$ are linearly independent over
$\overline{\mathbb{Q}}$.
\end{theorem}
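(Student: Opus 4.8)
The plan is to reduce the statement to the already-proven Theorem 5.2 by a short contrapositive argument. Suppose $p_1$ and $p_2$ are linearly dependent over $\overline{\mathbb{Q}}$; I want to derive that $\deg(p_1)=\deg(p_2)$, which contradicts the hypothesis. Linear dependence means there exist algebraic numbers $\alpha,\beta$, not both zero, with $\alpha p_1+\beta p_2=0$.

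First I would dispose of the degenerate cases. If one of $p_1,p_2$ is zero, then $\deg$ of that one is $0$; but then linear dependence is automatic and forces nothing, so I should instead observe that the interesting content is when both are nonzero — and in fact if $p_1=0$ then for the conclusion to be meaningful we note $p_1,p_2$ are linearly independent iff both are nonzero and their ratio is irrational, so the case $p_1=0$ (or $p_2=0$) with the other nonzero already gives linear independence unless the other is also $0$; I would handle this explicitly. Next, if either $p_i$ has infinite degree (i.e.\ is not a period), then since $\deg(p_1)\neq\deg(p_2)$ at most one of them is a non-period; if exactly one, say $p_2$, is not a period while $p_1$ is a nonzero period, then a relation $\alpha p_1+\beta p_2=0$ with $\beta\neq 0$ would force $p_2=-(\alpha/\beta)p_1$ to be a period (periods form a $\overline{\mathbb{Q}}$-vector space), contradiction; and $\beta=0$ forces $\alpha p_1=0$, so $\alpha=0$, contradicting non-triviality. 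So in the non-period case we already get linear independence directly.

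The main case is when both $p_1,p_2$ are nonzero periods and the relation $\alpha p_1+\beta p_2=0$ has both $\alpha,\beta$ nonzero (if one of them vanished, the other term would have to vanish too, forcing the corresponding $p_i=0$, already excluded). Then $p_1=(-\beta/\alpha)p_2$ with $-\beta/\alpha$ a nonzero algebraic number, so Proposition 3.4 gives $\deg(p_1)=\deg((-\beta/\alpha)p_2)=\deg(p_2)$, contradicting $\deg(p_1)\neq\deg(p_2)$. This completes the proof. I do not anticipate a serious obstacle: the only thing to be careful about is the bookkeeping of degenerate cases (zeros and non-periods), and the key algebraic input is exactly Proposition 3.4, which is available. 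One could even phrase the whole argument as: ``linear dependence over $\overline{\mathbb{Q}}$ of two numbers, at least one nonzero, means either one is zero or they have a common algebraic ratio; Proposition 3.4 shows a nonzero algebraic ratio preserves degree,'' so that Theorem 5.3 is really just a clean restatement in the language of linear independence, with Theorem 5.2 being the special case relevant to transcendence.
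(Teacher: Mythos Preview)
Your argument is correct and essentially identical to the paper's: argue by contrapositive, dispose of the non-period case first, and in the main case write $p_1=(-\beta/\alpha)p_2$ with $-\beta/\alpha\in\overline{\mathbb{Q}}\setminus\{0\}$ and invoke Proposition~3.4 to get $\deg(p_1)=\deg(p_2)$. Two minor remarks: your discussion of the case $p_1=0$ is muddled (the set $\{0,p_2\}$ is always linearly \emph{dependent}, so the theorem tacitly requires both $p_i\neq 0$; the paper glosses over this as well), and despite your opening sentence you never actually reduce to the previous transcendence theorem --- your proof, like the paper's, rests directly on Proposition~3.4.
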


\begin{proof}
If one is not a period, the theorem is obviously true. We may assume
that both are periods. If $p_{1}$ and $p_{2}$ are linearly
dependent, let $ap_{1}+bp_{2}=c,
a,b,\in\overline{\mathbb{Q}}\setminus0, c\in\overline{\mathbb{Q}}$.
Then $\deg(p_{1})= \deg(\frac{c}{a}-\frac{c}{b}p_{2})=\deg(p_{2})$.
Which is a contradiction.
\end{proof}

It is obviously that above results can extend to arbitrary periods.
That is, if $1<\deg(p_{1})<\deg(p_{2})<...<\deg(p_{k})$, then
$p_{1}+p_{2}+...+p_{k}$ is transcendental. If $1\leq
\deg(p_{1})<\deg(p_{2})<...<\deg(p_{k})\leq\infty$, then
$p_{1},p_{2},...,p_{k}$ are linearly independent over
$\overline{\mathbb{Q}}$.

It was conjectured in [2] that the basis of the natural logarithms
$e$ is not a period. i.e. $\deg(e)=\infty$. This implies that
$e+\pi$ is a transcendental number. Using Theorem 5.2 we can improve
this as

\begin{corollary}
To prove that $e+\pi$ is a transcendental number, one only needs to
prove that $\deg(e)\geq3$.
\end{corollary}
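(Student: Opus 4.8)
The plan is to deduce the corollary directly from Theorem 5.2 by taking $p_1 = e$ and $p_2 = \pi$. First I would observe that $\pi$ is a transcendental period with $\deg(\pi) = 2$, as already noted in the excerpt after Definition 3.1. Next, $e$ is transcendental (Hermite), and whether or not $e$ is a period, its degree is defined: if $e$ is not a period we set $\deg(e) = \infty$ by the convention introduced right after Definition 3.1, and in any case $\deg(e) \geq 2$ since $\deg(p) = 1$ holds precisely for nonzero algebraic numbers and $e \notin \overline{\mathbb{Q}}$.

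The key step is then a case split. If one can show $\deg(e) \geq 3$, then in particular $\deg(e) \neq \deg(\pi) = 2$; since $e$ and $\pi$ are both transcendental (and $\pi$ is a period while $e$ either is a period or has $\deg(e) = \infty$, so Theorem 5.2 applies in the extended sense discussed after Theorem 5.3), Theorem 5.2 gives that $e + \pi$ is transcendental. I should note that Theorem 5.2 as literally stated assumes both $p_1, p_2$ are periods, so I would invoke instead the extension remarked after Theorem 5.3, or argue directly: if $e + \pi$ were algebraic, then $e = (e+\pi) - \pi$ would be a period of degree $\deg(e) \leq \max(\deg(e+\pi), \deg(\pi)) = \max(1, 2) = 2$, contradicting $\deg(e) \geq 3$. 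This direct argument is cleaner and avoids any worry about whether $e$ is a period.

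The only remaining possibility to address is $\deg(e) = 2$, but this falls outside the hypothesis "one only needs to prove that $\deg(e) \geq 3$" — the corollary asserts that establishing $\deg(e) \geq 3$ \emph{suffices}, not that it is necessary, so no further case analysis is needed. I would therefore write the proof as: suppose $\deg(e) \geq 3$; if $e + \pi = c \in \overline{\mathbb{Q}}$, then $\deg(e) = \deg(c - \pi) = \deg(-\pi) = \deg(\pi) = 2$ by Proposition 3.4 (adding the nonzero algebraic number $c$ leaves the degree unchanged, as does negation), contradicting $\deg(e) \geq 3$; hence $e + \pi \notin \overline{\mathbb{Q}}$.

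I do not expect any serious obstacle here — the corollary is essentially a restatement of Theorem 5.2 (or its direct-argument variant) specialized to $e$ and $\pi$, together with the elementary facts $\deg(\pi) = 2$ and Proposition 3.4. The only subtlety worth a sentence is the one noted above: making sure the degree machinery applies to $e$ even though $e$ is not known to be a period, which is handled by the convention $\deg(e) = \infty$ and by phrasing the contradiction in terms of $e = (e+\pi) - \pi$ rather than in terms of $e + \pi$ directly.
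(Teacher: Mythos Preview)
Your proposal is correct and follows essentially the same approach as the paper, which presents the corollary as an immediate consequence of Theorem~5.2 together with the observation $\deg(\pi)=2$, without giving a separate proof. Your extra care in handling the case where $e$ might not be a period---by arguing directly that $e+\pi=c\in\overline{\mathbb{Q}}$ would force $\deg(e)=\deg(c-\pi)=\deg(\pi)=2$ via Proposition~3.4---is in fact more rigorous than the paper's treatment, which leaves this point implicit.
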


\section{zeta functions of periods}

Let $p$ be a period. We consider the following zeta function for $p$
$$\zeta_{p}(t)=\exp(\sum_{m=1}^{\infty}\frac{t^{m}\deg(p^{m})}{m}), 0\leq t<1.$$
It is the analogue of Weil's zeta function for algebraic variety
over finite fields. We find that $\zeta_{p}(t)$ has some interesting
properties.

\begin{theorem}
\begin{enumerate}
\item $\zeta_{0}(t)=1$. If $p$ is a non-zero algebraic number, then $\zeta_{p}(t)=\frac{1}{1-t}.$

\item $\zeta_{p_{1}p_{2}}(t)\leq\zeta_{p_{1}}(t)\zeta_{p_{2}}(t).$

\item $\zeta_{p}(t)\leq\exp(\frac{t\deg(p)}{1-t}).$

\item If $\deg(p_{1})\leq\deg(p_{2})$, then $\zeta_{p_{1}+p_{2}}(t)\leq\exp(\frac{t\deg(p_{2})}{1-t}).$
\end{enumerate}
\end{theorem}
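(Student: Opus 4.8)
The plan is to reduce every assertion to a term‑by‑term comparison of the series in the exponent, using that for $0\le t<1$ the coefficients $t^m/m$ are nonnegative and that $\exp$ is monotone increasing. The one structural fact I would isolate first is the submultiplicativity of degree under powers,
\[
\deg(p^m)\le m\,\deg(p)\qquad(m\ge1),
\]
which follows from Proposition 3.2 by an immediate induction, $\deg(p^m)=\deg(p\cdot p^{m-1})\le\deg(p)+\deg(p^{m-1})$. This bound also shows that $\sum_{m\ge1}t^m\deg(p^m)/m$ is dominated by $\deg(p)\sum_{m\ge1}t^m$ and hence converges for $0\le t<1$, so $\zeta_p(t)$ is well defined. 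For part (1): if $p=0$ then $p^m=0$ and $\deg(p^m)=0$ for every $m$, so the exponent vanishes and $\zeta_0(t)=1$; if $p$ is a nonzero algebraic number then each $p^m$ is again a nonzero algebraic number, so $\deg(p^m)=1$ by the remark after Definition 3.1, the exponent equals $\sum_{m\ge1}t^m/m=-\log(1-t)$, and $\zeta_p(t)=1/(1-t)$.

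For part (2) I would use $\deg\big((p_1p_2)^m\big)=\deg\big(p_1^m p_2^m\big)\le\deg(p_1^m)+\deg(p_2^m)$, again by Proposition 3.2. Since $t^m/m\ge0$, summing over $m$ gives
\[
\sum_{m\ge1}\frac{t^m\deg((p_1p_2)^m)}{m}\le\sum_{m\ge1}\frac{t^m\deg(p_1^m)}{m}+\sum_{m\ge1}\frac{t^m\deg(p_2^m)}{m},
\]
and applying $\exp$ (which converts the inequality of exponents into the claimed inequality and the sum into a product) yields $\zeta_{p_1p_2}(t)\le\zeta_{p_1}(t)\zeta_{p_2}(t)$. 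Part (3) is then immediate from the power bound: $\sum_{m\ge1}t^m\deg(p^m)/m\le\deg(p)\sum_{m\ge1}t^m=t\deg(p)/(1-t)$, hence $\zeta_p(t)\le\exp\big(t\deg(p)/(1-t)\big)$.

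For part (4) I would expand $(p_1+p_2)^m=\sum_{k=0}^m\binom{m}{k}p_1^k p_2^{m-k}$ and bound the degree of each summand. Each $\binom{m}{k}$ is a positive integer, so $\binom{m}{k}p_1^k p_2^{m-k}$ is a finite sum of copies of $p_1^k p_2^{m-k}$ and therefore, by Proposition 3.2 (or by Proposition 3.4), obeys the same degree bound as $p_1^k p_2^{m-k}$, namely
\[
\deg\big(p_1^k p_2^{m-k}\big)\le\deg(p_1^k)+\deg(p_2^{m-k})\le k\deg(p_1)+(m-k)\deg(p_2)\le m\deg(p_2),
\]
where the hypothesis $\deg(p_1)\le\deg(p_2)$ is used in the last step. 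Summing the $m+1$ terms and invoking Proposition 3.2 once more gives $\deg\big((p_1+p_2)^m\big)\le m\deg(p_2)$, and then the computation of part (3) applied to $p_1+p_2$, with $m\deg(p_2)$ in place of $m\deg(p_1+p_2)$, gives $\zeta_{p_1+p_2}(t)\le\exp\big(t\deg(p_2)/(1-t)\big)$. The only place that needs genuine care is this last step: one must check that the integer binomial coefficients do not inflate the degree, which is why it is cleaner to rewrite $\binom{m}{k}q$ as a repeated sum and use the additive estimate of Proposition 3.2 rather than a multiplicative one. Everything else is bookkeeping with nonnegative series together with the monotonicity of $\exp$, so I do not expect a serious obstacle beyond this point.
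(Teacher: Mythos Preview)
Your proposal is correct and follows essentially the same route as the paper: term-by-term comparison in the exponent using Proposition~3.2, the power bound $\deg(p^m)\le m\deg(p)$ for part~(3), and a binomial expansion with $\deg(p_1^kp_2^{m-k})\le m\deg(p_2)$ for part~(4). You are in fact slightly more careful than the paper in two places---you note the convergence of the defining series, and you explicitly handle the binomial coefficients in part~(4) via Proposition~3.4 (the paper silently passes over both points)---but these are refinements, not differences in strategy.
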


\begin{proof}
Since $\deg(0)=0$ and $\deg(p)=1$ for non-zero algebraic number $p$.
(1) is directly. From Proposition 3.2, we have
\begin{eqnarray*}
\zeta_{p_{1}p_{2}}(t)& = & \exp(\sum_{m=1}^{\infty}\frac{t^{m}\deg(p_{1}^{m}p_{2}^{m})}{m})\\
                     & \leq & \exp(\sum_{m=1}^{\infty}\frac{t^{m}(\deg(p_{1}^{m})+\deg(p_{2}^{m}))}{m})\\
                     & = & \zeta_{p_{1}}(t)\zeta_{p_{2}}(t),\\
\end{eqnarray*}
and$$\zeta_{p}(t)\leq\exp(\sum_{m=1}^{\infty}t^{m}\deg(p))=\exp(\frac{t\deg(p)}{1-t}).$$

If $\deg(p_{1})\leq\deg(p_{2})$,
\begin{eqnarray*}
\zeta_{p_{1}+p_{2}}(t) & = & \exp(\sum_{m=1}^{\infty}\frac{t^{m}\deg((p_{1}+p_{2})^{m})}{m})\\
                       & \leq & \exp(\sum_{m=1}^{\infty}\frac{t^{m}\deg(p_{1}^{k}p_{2}^{m-k})}{m})\\
                       & \leq & \exp(\sum_{m=1}^{\infty}\frac{t^{m}(k\deg(p_{1})+(m-k)\deg(p_{2}))}{m})\\
                       & \leq & \exp(\sum_{m=1}^{\infty}\frac{t^{m}m\deg(p_{2})}{m})\\
                       & = & \exp(\frac{t\deg(p_{2})}{1-t}).\\
\end{eqnarray*}

In the second step we choose $k$ such that
$\deg(p_{1}^{k}p_{2}^{m-k})=\max\{\deg(p_{1}^{i}p_{2}^{m-i}), 0\leq
i\leq m\}$.

\end{proof}

\textbf{ Problem}: Let $p$ be a non-algebraic period. Is
$\zeta_{p}(t)$ a transcendental function?

\bibliographystyle{amsplain}

\end{document}